\newtheorem*{theorem*}{Theorem}
\newtheorem{lemma}{Lemma}
\newtheorem*{conjecture*}{Conjecture}
\theoremstyle{remark}
\numberwithin{equation}{section}
\newcommand{\N}{\mathbb N}
\begin{document}
\title{Unconditional Prime-representing Functions, Following Mills}
\markright{Notes}
\author{Christian Elsholtz}
\maketitle


\date{}

\begin{abstract}
Mills proved that there exists a real constant $A>1$ such that
for all $n\in \N$ the values $\lfloor A^{3^n}\rfloor$ 
are prime numbers. 
No explicit value of $A$ is known, but assuming the Riemann
hypothesis one can choose $A= 1.3063778838\ldots .$
Here we give a first \emph{unconditional} variant:
$\lfloor A^{10^{10n}}\rfloor$ is prime, where
$A=1.00536773279814724017\ldots$ can be computed to millions of digits.
Similarly, $\lfloor A^{3^{13n}}\rfloor$ is prime, with
$A=3.8249998073439146171615551375\ldots .$ 

\end{abstract}


Mills \cite{Mills:1947} proved that there exists a real number $A>1$
such that for all $n \in \N$ the values $f(n)=\lfloor A^{3^n}\rfloor$ are prime.
For some related work see
\cite{CaldwellandCheng:2005, Kuipers:1950, Matomaki:2010,
Niven:1951, Ore:1952, Wright:1951, Wright:1954} 
 and 
\cite[Exercise 1.23]{EllisonandEllison:1985}.
Even though such formulae encode existing knowledge of primes, 
rather than generate new primes, and even though the proof shows that many
such values $A$ exist, it is quite astonishing
that not a single value $A$ is known. It is not even known 
whether any $A<10^{1\, 000\, 000}$ (say) exists
such that the statement holds, or not. 
The reason for this is that Mills made use of a result of Ingham that there
is always a prime between $n$ and $n+cn^{5/8}$, for some (nonexplicit) 
positive constant $c$, which implies that there always exists a prime
between any two \emph{sufficiently large} cubes. 
With current knowledge this is known only for cubes $t^3$ of size at least
$t^3\geq e^{e^{33.217}}$, a number which has $115,809,481,360,809$ digits 
(a result of Dudek, see \cite{Dudek:2016}). 
This number is very much larger than the
largest known primes, which in turn are primes of a very 
special type $q=2^n-1$ (Mersenne primes). 
Even with an improvement on Dudek's result and
with expected progress on primality tests, bridging this huge 
gap would appear to be several decades away.

Some escape routes out of this dilemma have been
studied:
\begin{enumerate}
\item
Caldwell and Cheng \cite{CaldwellandCheng:2005} 
observe that assuming the Riemann hypothesis the sequence of primes
$b_1=2, b_2=11, b_3=1361, b_4=2521008887, b_5=160222362 0400981813 1831320183$
can be continued such that Mills' result on $\lfloor A^{3^n} \rfloor$ 
holds with
$A= \lim_{n \rightarrow \infty}b_n^{3^{-n}}=1.3063778838\ldots .$ They also
write that an {\emph{unconditional}} value of $A$ is completely 
out of range of today's methods, 
due to the issue with ``sufficiently large'' cubes mentioned above.
\item For an unconditional result,
Wright \cite{Wright:1951} introduced 
a very rapidly increasing tower-type sequence: There exists a constant
$\omega= 1.9287800\ldots$ such that with 
$g_1=2^{\omega}, g_{n+1}=2^{g_n}$, the values of $\lfloor g_n \rfloor$ are
prime, for all integers $n\geq 1$,
starting with
$p_1=\lfloor 2^\omega\rfloor=3,p_2=\lfloor 2^{2^{\omega}}\rfloor =13,p_3=
\lfloor 2^{2^{2^{\omega}}}\rfloor =16381$.

\item Some formulae producing all primes also exist.
It follows from Wilson's theorem that the function 
\[f(n)=\left\lfloor \frac{n!\mod(n+1)}{n}\right\rfloor (n-1)+2\]
takes the value $f(n)=p_i$, where $p_i$ is the $i$th prime, when
$n=p_i-1$ and is $2$ otherwise.
Hence the values of $f$  are prime for all $n \in \N$.
Another example is Gandhi's formula
\[ p_n=\left\lfloor 1-\log_2\left(-\frac{1}{2}+ 
\sum_{d|P_{n-1}} \frac{\mu(d)}{2^d-1}\right) \right\rfloor,\]
where $\log_2$ denotes the logarithm to base $2$,
$\mu$ the M\"obius function,
$P_{n-1}= \prod_{i=1}^{n-1}p_i$, and $p_i$ is the $i$th prime in
ascending order.

Very recently a new formula was found \cite{Fridmanetal}: 
there exists a constant
$f_1=2.920050977316\ldots$ such that the sequence
$f_n=\lfloor f_{n-1}\rfloor (f_{n-1}-\lfloor f_{n-1}\rfloor +1)$ has the
property that $p_n=\lfloor f_n \rfloor$.

\end{enumerate}
A survey on such questions is in Ribenboim's book \cite{Ribenboim}.

In this note we prove the following \emph{unconditional} result 
on sequences in the spirit of Mills, which grow asymptotically 
much less rapidly than Wright's sequence.

\begin{theorem*}
\begin{itemize}
\item[a)]
Let $p$ be a Mersenne exponent, i.e., $2^p-1$ is a prime.
For every integer   
$m\geq  1\,438\,989$, there exists a real constant 
$A_{m,p}>1$ such that for all $n \in \N$ the values of all functions
$f_{m,p}(n)=\lfloor A_{m,p}^{m^n}\rfloor $ are prime.
Moreover, the values $A_{m,p}$ can be estimated as follows:
\[\frac{p}{m}\log 2
-\frac{2}{m2^{p}}<\log A_{m,p} <\frac{p}{m}\log 2. \]
If $p$ is large this gives a very high precision.
(The proof gives even more precise estimates.)
\item[b)]
Specializing to the Mersenne exponent $p=77\,232\,917$:
There is a constant $A=1.00536773279814724017\ldots$ 
such that all values of $\lfloor A^{10^{10n}}\rfloor, n \in \N$, are prime.
The constant $A$ can be computed to millions of decimal places.
\item[c)]
With the same $p$: $\lfloor A^{3^{13n}}\rfloor$ is prime
with $A=3.82499980734391461716\ldots$ .

\end{itemize}
\end{theorem*}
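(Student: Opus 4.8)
The plan is to adapt Mills' original bootstrapping argument, but to replace his reliance on Ingham's nonexplicit prime-gap result with Dudek's explicit bound: there is a prime between $t^3$ and $(t+1)^3$ whenever $t^3 \geq e^{e^{33.217}}$. The key insight that makes an unconditional construction possible is that we are free to choose the exponent base $m$ as large as we like, and to \emph{start} the sequence at an enormous but concrete prime, namely a Mersenne prime $2^p-1$. Starting high enough lets us stay above Dudek's threshold at every step, while choosing $m$ large enough controls the multiplicative gap between consecutive terms.

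First I would recall the standard Mills-type lemma: if one has a strictly increasing sequence of primes $(P_n)$ satisfying $P_n^m < P_{n+1} < (P_n+1)^m$ for all $n$, then the two sequences $u_n = P_n^{m^{-n}}$ and $v_n = (P_n+1)^{m^{-n}}$ are respectively increasing and decreasing with a common limit $A$, and $\lfloor A^{m^n}\rfloor = P_n$ for every $n$. The entire theorem then reduces to constructing such a sequence of primes. So the core step is to show that, given a prime $P_n$, there always exists a prime $P_{n+1}$ in the open interval $(P_n^m, (P_n+1)^m)$. The natural route is to find an integer $t$ with $P_n^m \leq t^3$ and $(t+1)^3 \leq (P_n+1)^m$, and then invoke Dudek to place a prime between $t^3$ and $(t+1)^3$, which therefore lies in the desired interval.

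The main obstacle, and the reason for the precise hypotheses on $m$ and $p$, is purely quantitative: I must verify both that the interval $(P_n^m, (P_n+1)^m)$ is wide enough to contain a full cube-block $[t^3,(t+1)^3)$, and that the smallest cube in play already exceeds Dudek's threshold $e^{e^{33.217}}$. The cube-fitting condition amounts to showing that the ratio $(P_n+1)^m / P_n^m = (1+1/P_n)^m$ leaves room for the relative gap $(t+1)^3/t^3 = (1+1/t)^3$ between adjacent cubes; since $t \approx P_n^{m/3}$ is astronomically large, the cube gap $(1+1/t)^3$ is negligible, and the condition reduces to a clean inequality that $m$ must satisfy relative to the size of $P_1 = 2^p-1$. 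This is where the threshold $m \geq 1\,438\,989$ in part (a) comes from: one needs $P_1^m$ (hence $t^3 \geq P_1^m$) to exceed $e^{e^{33.217}}$, which forces $m \log P_1 \gtrsim e^{33.217}$, and with $P_1 = 2^p-1$ this translates into the stated lower bound on $m$. I would carry out this threshold computation first, then confirm the cube-fitting inequality holds at the base step, and finally note that both conditions are self-improving: since $P_{n+1} > P_n$, every subsequent interval is even wider and sits even higher, so a single successful base step propagates by induction to all $n$.

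For the estimate on $A_{m,p}$, I would use $\log A = \lim \frac{\log P_n}{m^n}$ sandwiched by $\frac{\log P_1}{m}$ and $\frac{\log(P_1+1)}{m}$; writing $P_1 = 2^p - 1$ and expanding $\log(2^p-1) = p\log 2 + \log(1 - 2^{-p})$ gives $\log A < \frac{p}{m}\log 2$ on the upper side, while the lower bound $\frac{p}{m}\log 2 - \frac{2}{m2^p}$ follows from the elementary inequality $\log(1-x) > -2x$ valid for small $x = 2^{-p}$. Parts (b) and (c) are then just specializations: with $p = 77\,232\,917$ one checks that $m = 10^{10}$ and $m = 3^{13} = 1\,594\,323$ both exceed the threshold $1\,438\,989$, and the displayed decimal values of $A$ are obtained by evaluating the now very tight bounds on $\log A_{m,p}$ to the required precision.
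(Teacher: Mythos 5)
Your overall architecture (Mills' nested-interval argument, and the estimate of $\log A_{m,p}$ from the first term of the sequence) matches the paper, and your derivation of the bounds $\frac{p}{m}\log 2 - \frac{2}{m2^p} < \log A_{m,p} < \frac{p}{m}\log 2$ is essentially the paper's. But there is a genuine gap in the core existence step. You base the induction on Dudek's \emph{cube} theorem, which requires $t^3 \geq e^{e^{33.217}}$, and you propose to clear this threshold by starting at $P_1 = 2^p-1$. That forces $m\log(2^p-1) \geq e^{33.217} \approx 2.67\times 10^{14}$, a condition that depends on $p$ and that the theorem's data do not satisfy. For part (c), $m = 3^{13} = 1\,594\,323$ and $p = 77\,232\,917$ give $m\log P_1 \approx 8.5\times 10^{13} < e^{33.217}$, so the very first interval $(P_1^m,(P_1+1)^m)$ lies entirely below Dudek's threshold and your induction cannot even start. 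For part (a) it is worse: the bound $m \geq 1\,438\,989$ is asserted uniformly for \emph{every} Mersenne exponent $p$ (including $p = 2, 3, 5, \ldots$), and with $m = 1\,438\,989$ not even the largest known Mersenne exponent meets your requirement. Your claim that the threshold $1\,438\,989$ ``comes from'' $m\log P_1 \gtrsim e^{33.217}$ is therefore incorrect: that computation gives roughly $m \geq 5\times 10^6$ for $p = 77\,232\,917$, and a $p$-dependent bound in general. Only part (b), with $m = 10^{10}$, survives your argument.

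The ingredient your proposal is missing, and which the paper uses instead, is an explicit result about consecutive $m$-th powers rather than cubes: Dudek observed (with threshold $m \geq 4.97117\cdot 10^9$), and Mattner improved (the paper's Lemma 1), that for $m \geq 1\,438\,989$ there is a prime strictly between $n^m$ and $(n+1)^m$ for \emph{all} $n \geq 1$, with no lower bound on $n$ whatsoever. That is where the constant $1\,438\,989$ actually lives; it cannot be recovered from the cube threshold by starting high. With Mattner's lemma the induction runs from any starting prime, and the Mersenne prime then plays a completely different role than in your proposal: it is not needed for existence but for \emph{computability} of $A$, since $p_1 = 2^p-1$ is known exactly and $\log p_1 = p\log 2 + \log(1-2^{-p})$ pins $\log A$ into an interval of length about $2/(m2^p)$, i.e., determines $A$ to millions of decimal digits. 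One smaller point: your ``standard Mills-type lemma'' needs the additional observation, made explicitly in the paper, that $(P_n+1)^m-1$ is divisible by $P_n$ and hence composite, so that in fact $P_{n+1}+1 < (P_n+1)^m$; this makes the upper sequence $(P_n+1)^{m^{-n}}$ strictly decreasing and rules out the boundary case $A^{m^n} = P_n+1$, without which the conclusion $\lfloor A^{m^n}\rfloor = P_n$ could fail.
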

The proof makes use of two nontrivial ingredients. 

The first ingredient of the proof is an {\emph{explicit}} 
variant of the existence of primes in certain intervals.
Dudek \cite{Dudek:2016}
observed that for $m\geq 4.97117 \cdot 10^9$
there is a prime between $n^m$ and $(n+1)^m$, for {\emph{all}} 
values of $n \in \N$. 
The strongest currently known estimate of this kind
is due to Mattner \cite{Mattner}:
\begin{lemma}{\label{lem:mattner}}
Let $m\geq 1\, 438\, 989$. Then there is a prime with
$n^m< p < (n+1)^m$ for all $n\geq 1$.
\end{lemma}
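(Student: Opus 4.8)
The plan is to reduce the assertion to a lower bound on a difference of Chebyshev functions and then to feed in the sharpest available explicit prime-counting estimates across the full range $x=n^m$. Writing $a=n^m$ and $b=(n+1)^m$, a prime lies in $(a,b)$ as soon as $\theta(b)>\theta(a)$, where $\theta(x)=\sum_{p\le x}\log p$. Since $\theta(x)=\psi(x)-\sum_{k\ge 2}\theta(x^{1/k})$ and the prime-power correction is $O\bigl((n+1)^{m/2}\log((n+1)^m)\bigr)$, which is dwarfed by the gap $b-a\ge m\,n^{m-1}$ once $m$ is large, it suffices to show $\psi(b)-\psi(a)>0$ with a comfortable margin. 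Thus the whole problem becomes: force $b-a>|\psi(b)-b|+|\psi(a)-a|$, i.e.\ make the interval wider than twice the admissible error in the prime number theorem at height about $a$.

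First I would dispose of the trivial case $n=1$, where the interval $(1,2^m)$ is enormous. For $n\ge 2$ the relevant size is already astronomical, $\log a=m\log n\ge m\log 2$, and the binding quantity is the relative width $1-a/b=1-(n/(n+1))^m\ge 1-e^{-m/(n+1)}$, which for large $n$ behaves like $m/n$. I would compare this against the relative error from an explicit de la Vall\'ee Poussin-type estimate $|\psi(x)-x|\le c_1\,x\exp(-c_2\sqrt{\log x})$, using the best known zero-free region to fix $c_1,c_2$. Substituting $\log a\approx m\log n$, the inequality $m/n>2c_1\exp(-c_2\sqrt{m\log n})$ reduces, after taking logarithms, to a condition of the shape $\log n-c_2\sqrt{m}\,\sqrt{\log n}<C$ for an absolute constant $C$: a quadratic inequality in $\sqrt{\log n}$ that holds precisely while $\log n$ stays below a fixed multiple of $c_2^{2}m$. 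Hence the zero-free-region input alone settles all $n$ with $\log n\le\kappa\,c_2^{2}m$ for a suitable $\kappa$.

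For the remaining, truly gigantic values of $n$ — where $a=n^m$ exceeds the threshold $x_2$ of an explicit power-saving short-interval theorem — I would switch inputs: an explicit version of ``there is a prime in $(x-x^{\vartheta},x]$ for $x\ge x_2$'' (of the type underlying Dudek's primes-between-cubes result \cite{Dudek:2016}) applies, because the gap $b-a\ge m\,n^{m-1}=m\,a^{\,1-1/m}$ exceeds $a^{\vartheta}$ as soon as $1-1/m\ge\vartheta$, which is automatic for $m$ this large. The two regimes must overlap: the range $\log a\le\kappa\,c_2^{2}m^{2}$ covered by the zero-free region has to reach the short-interval threshold $\log x_2$, which forces $m$ to exceed a fixed multiple of $\sqrt{\log x_2}\,/\,c_2$. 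Carrying out this matching with the optimal explicit constants is exactly what produces the stated cutoff $m\ge 1\,438\,989$.

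The main obstacle is therefore not any single estimate but the uniform bookkeeping across the crossover: one must track all implied constants honestly, verify that the quadratic-in-$\sqrt{\log n}$ inequality never fails before the short-interval theorem takes over, and optimize the trade-off between the strength of the zero-free region (which governs $c_2$ and hence the reachable range) and the threshold and exponent of the explicit short-interval result (which govern $x_2$ and $\vartheta$). Squeezing the cutoff down from Dudek's $4.97\cdot10^{9}$ to $1\,438\,989$ is precisely a matter of inserting the sharpest contemporary versions of both ingredients and checking that the two ranges still meet.
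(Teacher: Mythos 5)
You should first be aware that the paper contains no proof of this lemma to compare against: it is imported as an external result from Mattner's B.Sc.\ thesis \cite{Mattner}, with Dudek's weaker threshold $m\geq 4.97117\cdot 10^{9}$ \cite{Dudek:2016} cited as the precedent. Judged as a candidate replacement for that citation, your outline does capture the standard architecture of such results, and your reductions are set up correctly: Bertrand's postulate for $n=1$; the passage from $\theta$ to $\psi$ is harmless, since the prime-power correction $O\bigl(b^{1/2}\log b\bigr)$ is negligible against the gap $b-a\geq m\,n^{m-1}$; the explicit bound $|\psi(x)-x|\leq c_1 x\exp(-c_2\sqrt{\log x})$ does settle, via your quadratic in $\sqrt{\log n}$, all $n$ with $\log n$ up to roughly $c_2^{2}m$; and an explicit short-interval theorem with exponent $\vartheta$ and threshold $x_2$ handles the rest, with the matching condition $m\gtrsim \sqrt{\log x_2}/c_2$ emerging exactly as you say.

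The genuine gap is that the entire quantitative content---which for this lemma is the \emph{only} content---is deferred. You never fix $c_1$ and $c_2$ (e.g., from an explicit prime number theorem error term), never fix $\vartheta$ and $x_2$ (e.g., Dudek's prime in $(x, x+3x^{2/3}]$ for $x\geq e^{e^{33.217}}$), never solve the quadratic with actual numbers, and never verify that the two regimes in fact meet at the claimed cutoff. Your closing assertion that carrying out the matching ``is exactly what produces the stated cutoff $m\geq 1\,438\,989$'' assumes precisely what must be checked: the outcome of this matching is extremely sensitive to the constants, and indeed Dudek's own execution of essentially this same two-regime argument yields only $m\geq 4.97117\cdot 10^{9}$. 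Driving the threshold down by three orders of magnitude to $1\,438\,989$ is the actual substance of Mattner's work, requiring specific sharpened inputs and computations that your sketch neither identifies nor reproduces. So what you have is a correct roadmap---plausibly the same roadmap as the cited source---but not a proof, and as written it could not substitute for the reference to \cite{Mattner}.
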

The second ingredient is
a quite large prime. We choose the second-largest prime that is currently 
known, a Mersenne prime, see \cite{Mersenne}:
\begin{lemma}
$2^{77\,232\,917}-1$ is a prime number.
\end{lemma}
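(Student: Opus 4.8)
The plan is to establish primality by the \emph{Lucas--Lehmer test}, the standard provably correct criterion tailored to Mersenne numbers. Recall its statement: for an odd prime $p$, set $M_p = 2^p - 1$ and define the sequence $s_0 = 4$, $s_{k+1} = s_k^2 - 2$. Then $M_p$ is prime if and only if $M_p \mid s_{p-2}$. The correctness of this criterion is a classical theorem, provable by working in the ring $\Z[\sqrt{3}]$ and tracking the multiplicative order of $2 + \sqrt{3}$ modulo a hypothetical prime factor of $M_p$; I would cite this rather than reprove it. The key point is that the whole task reduces to a single divisibility check, and no factorization of $M_p$ is ever required. This is a genuine deterministic proof of primality, not a probabilistic one.

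First I would confirm that the exponent $p = 77\,232\,917$ is itself prime, which is necessary because $2^{ab}-1$ is divisible by $2^a-1$. Since $p$ has only eight digits, this is trivial (trial division up to $\sqrt{p} < 8789$, or any standard test). Next I would run the Lucas--Lehmer recursion, reducing modulo $M_p$ at every step so that all intermediate values stay below $M_p$. Reduction modulo $2^p-1$ is itself inexpensive: since $2^p \equiv 1$, one splits a number into its low and high $p$-bit halves and adds them, repeating once or twice to settle any carry. The verdict is then read off from whether $s_{p-2} \equiv 0 \pmod{M_p}$.

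The main obstacle is computational scale rather than any mathematical subtlety. The number $M_p$ has more than $23$ million decimal digits (about $77$ million bits), and the test requires $p - 2 \approx 7.7 \times 10^{7}$ successive modular squarings of numbers of that size. Schoolbook squaring is hopeless here; each squaring must use an asymptotically fast multiplication, in practice a floating-point FFT, with rounding errors rigorously bounded and verified at each step. This places the computation far beyond hand calculation, and indeed beyond a casual desktop run: it is the kind of month-long distributed effort carried out by the Great Internet Mersenne Prime Search. Accordingly, for this note I would record the statement as the outcome of that verified computation (see \cite{Mersenne}), observing that the Lucas--Lehmer result has been independently re-run on different hardware with different transform implementations, and cross-checked against a Fermat probable-prime test, so that the probability of an undetected arithmetic error is negligible.
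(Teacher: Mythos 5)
Your proposal is correct, and at the decisive point it arrives exactly where the paper does: the paper offers no proof of this lemma at all, asserting it with a bare citation to the GIMPS list \cite{Mersenne}, and your argument likewise concludes by recording the statement as the outcome of that verified distributed computation. What you add, and the paper omits, is the mathematical content behind the citation: the Lucas--Lehmer criterion (with $s_0=4$, $s_{k+1}=s_k^2-2$, and $M_p$ prime if and only if $M_p \mid s_{p-2}$), the correct remark that the exponent $p=77\,232\,917$ must itself be prime (checkable by trial division, since $\sqrt{p}<8789$), and the cheap shift-and-add reduction modulo $2^p-1$. Your arithmetic checks out ($M_p$ has about $23.2$ million decimal digits, and the test needs roughly $7.7\times 10^7$ modular squarings), and your assessment of feasibility is accurate: this is a deterministic criterion whose execution is beyond hand or casual machine verification, which is precisely why both you and the paper must ultimately defer to the GIMPS computation and its independent re-verifications. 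The net effect is that your version is more informative---it makes explicit that the lemma rests on a machine computation via a provably correct test, rather than hiding that dependence in a URL---while the paper's treatment buys brevity at the cost of self-containedness. Neither route yields a humanly checkable proof, and for a result of this size none is currently possible; your honesty about that is a feature, not a gap.
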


\begin{proof}[Proof of Theorem]
For our application we need to reduce the size of the interval 
in Lemma \ref{lem:mattner} by one element,
namely $(n+1)^m-1$ is divisible by $n$ so that the prime satisfies
$n^m< p < (n+1)^m-1$.
From this we can construct 
a sequence of primes $p_1, p_2, \ldots$ with 
$p_n^m<p_{n+1}<(p_n+1)^{m}-1$. 
Raising these inequalities (adding $1$ where necessary) to the
$m^{-n-1}$th power 
gives 
\[ p_n^{m^{-n}}<p_{n+1}^{m^{-n-1}}< (p_{n+1}+1)^{m^{-n-1}}
<(p_{n}+1)^{m^{-n}}.
\]
From this we see that the sequence 
$\alpha_n=(p_n^{m^{-n}})$ is an increasing sequence,
whereas the sequence $\beta_n=((p_{n}+1)^{m^{-n}})$ is decreasing
with increasing $n$. 
Hence the sequence $\alpha_n$ is also bounded and therefore 
the limit $A:=\lim_{n \rightarrow \infty} p_n^{m^{-n}}$ exists. It follows that
$p_n\leq A^{m^n}<p_n+1$ and so $p_n=\lfloor A^{m^n}\rfloor$.

We take $p_1=\lfloor A^{m^1}\rfloor=2^{77\,232\, 917}-1$. 
\[2^{77\,232\, 917}-1=2^{77\, 232\, 917}(1-\frac{1}{2^{77\, 232\,917}})
<A^{m}<2^{77\,232\,917}.\]
Taking the natural logarithm and observing that for small $x>0$ 
a simple explicit Taylor estimate gives
$-x-x^2< \log(1-x)< -x-\frac{x^2}{2}$ we find that
\begin{alignat}{3}
&\frac{77232917}{m}\log 2-\frac{1}{m2^{77232917}}-\frac{1}{m4^{77232917}}&\nonumber\\
<\log A <&\frac{77232917}{m}\log 2-\frac{1}{m2^{77232917}}
-\frac{1}{2m4^{77232917}}&\nonumber .
\end{alignat}
(This is more precise than stated in the theorem. Higher order 
Taylor estimates are also possible.)

Note that from this one can evaluate 
$\log A$ and therefore $A$ with an accuracy of millions of digits.
In particular, if $m=10^{10}$, then
$A=1.00536773279814724017\ldots,$ and similarly for
$m=3^{13}>1\, 438\, 989$.
(To see the implication that knowledge on high precision of $\log A$ 
implies also high precision for $A$:
Let $A_1<A_2$ be two constants with
$A_2=A_1(1+\varepsilon)$ (say), where $\varepsilon>0$ is a small constant. 
Then $\frac{\varepsilon}{2}<\log (1+\varepsilon)
=\log A_2- \log A_1=\log (1+\varepsilon)=
\varepsilon -\frac{\varepsilon^2}{2}\pm \cdots<\varepsilon$
and 
\[A_2-A_1=A_1 \varepsilon <2A_1 (\log A_2- \log A_1).\]
In other words, when $\log A$ is known with high precision and $A$ is of size,
say, $1<A<10$ as in the theorem, then very small deviations 
of $A$ to $A_1$ or $A_2$ would give
both $\log A_2-\log A_1$ and $A_2-A_1$ with about the same precision.
Hence $A$ is also known with high precision.)

\end{proof}

Similarly, every reader can produce their own formula by choosing a 
large number $m$ and a quite large prime $q$, 
for example the largest currently known prime $q=2^{82,589,933}-1$.
Then
$\frac{\log q}{m}- \frac{2}{mq} < \log A <
\frac{\log q}{m}$ determines $A$ with high precision.

Finally, as the actual distribution of primes might be much better than what 
can currently be proved, and based on some experiments, 
we conjecture the following:
\begin{conjecture*}
There is a constant $A$ (possibly near
$1.1966746500705764022$) such that
$f(n)=\lfloor A^{(n+1)^2}\rfloor $ is prime for all $n \geq 1$.
\end{conjecture*}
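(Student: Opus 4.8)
The plan is to follow the Mills-type argument used in the proof of the Theorem, but with the quadratic exponents $e_n=(n+1)^2$ in place of the geometric exponents $m^n$. That is, I would try to build a sequence of primes $p_1,p_2,\dots$ so that the intervals $[\alpha_n,\beta_n)=[p_n^{1/e_n},(p_n+1)^{1/e_n})$ are nested; then $A:=\lim_{n\to\infty}p_n^{1/e_n}$ exists and satisfies $p_n\le A^{(n+1)^2}<p_n+1$, i.e.\ $p_n=\lfloor A^{(n+1)^2}\rfloor$ is prime for every $n$. Writing $c_n:=e_{n+1}/e_n=(n+2)^2/(n+1)^2$, the nesting $[\alpha_{n+1},\beta_{n+1})\subseteq[\alpha_n,\beta_n)$ holds exactly when $p_{n+1}$ is a prime with
\[ p_n^{c_n}\le p_{n+1}<(p_n+1)^{c_n}. \]
So, exactly as in the Theorem, everything comes down to guaranteeing a prime in a prescribed interval; the only difference is the length of that interval.

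The decisive point is to estimate this length. Since $c_n=1+\frac{2n+3}{(n+1)^2}\to 1$, the mean value theorem gives
\[ (p_n+1)^{c_n}-p_n^{c_n}\approx c_n\,p_n^{\,c_n-1}=c_n\,p_n^{(2n+3)/(n+1)^2}. \]
Because $p_n^{1/(n+1)^2}\to A$, this window has length of order $A^{2n+3}$, located near $A^{(n+2)^2}$. Setting $N:=p_{n+1}\approx A^{(n+2)^2}$, so that $n\approx\sqrt{\log N/\log A}$, the window length becomes
\[ A^{2n+3}\approx\exp\!\left(2\sqrt{(\log A)(\log N)}\right)=N^{o(1)}. \]
Thus the construction succeeds provided that, for all large $N$ of this special form, there is a prime in an interval $[N,\,N+\exp(2\sqrt{(\log A)\log N})]$.

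This is a short-interval (Legendre-type) prime problem, and it is far stronger than anything currently provable. It is \emph{not} of the long-interval type $(n^m,(n+1)^m)$ covered by Mattner's Lemma~\ref{lem:mattner}, nor is it reachable from the Riemann hypothesis, which only yields primes in intervals of length about $\sqrt N\log N$; it lies instead in the regime of the Cramér--Granville conjecture on maximal prime gaps. Since $\exp(2\sqrt{(\log A)\log N})$ eventually exceeds any fixed power of $\log N$, and in particular exceeds the conjectural gap bound $O((\log N)^2)$, Cramér's conjecture would supply the required primes for all sufficiently large $n$; the finitely many remaining values are then checked directly from the explicit initial primes $2,5,17,89,\dots$, which also pin down the numerical constant $A=1.1966746500705764022\dots$.

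The hard part is precisely this last short-interval input: proving unconditionally that primes occur in intervals of length $N^{o(1)}$ around the relevant values of $N$. No such result is known even for intervals far longer than this, so a complete proof is out of reach, which is why the statement is offered only as a conjecture. A secondary technical point is that the naive greedy choice of $p_{n+1}$ (the least prime above $p_n^{c_n}$) need not reproduce the specific sequence $2,5,17,89,\dots$; to recover the stated value of $A$ one instead tracks the running intersection $\bigcap_{k\le n}[\alpha_k,\beta_k)$ and selects, at each stage, a prime whose interval meets it. For the mere \emph{existence} of some admissible $A$, however, any valid choice of primes in the windows above would do.
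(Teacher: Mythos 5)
The statement you are addressing is offered in the paper as a \emph{conjecture}, with no proof: the author motivates it only by the remark that the true distribution of primes is likely far better than what is provable, and by numerical experiments producing the primes $2,5,17,89,641,\dots$ and the approximate value $A\approx 1.1966746500705764022$. Your proposal is consistent with this and, in fact, supplies the heuristic justification the paper leaves implicit. Your reduction is the correct one: with $e_n=(n+1)^2$ and $c_n=e_{n+1}/e_n$, nested intervals $[p_n^{1/e_n},(p_n+1)^{1/e_n})$ force $p_n=\lfloor A^{(n+1)^2}\rfloor$ exactly as in the paper's proof of its Theorem, and your window-length computation is sound: the admissible window near $N\approx A^{(n+2)^2}$ has length about $c_n\,p_n^{c_n-1}\approx A^{2n+3}=\exp\bigl(2\sqrt{(\log A)\log N}+O(1)\bigr)=N^{o(1)}$, which is far shorter than what Mattner's bound (Lemma~\ref{lem:mattner}) or even the Riemann hypothesis (gaps $O(\sqrt{N}\log N)$) can reach, yet eventually longer than the Cram\'er--Granville gap bound $O((\log N)^2)$, so Cram\'er's conjecture would indeed settle all sufficiently large $n$. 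You correctly conclude that no unconditional proof is currently possible, which is precisely why the paper states this as a conjecture; there is no gap in your reasoning, only in the state of the art.

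One minor technical caveat, worth noting because the paper handles its analogue explicitly in the proof of the Theorem: the condition $p_{n+1}<(p_n+1)^{c_n}$ alone does not guarantee $(p_{n+1}+1)^{1/e_{n+1}}\le (p_n+1)^{1/e_n}$; you need the slightly stronger $p_{n+1}+1\le (p_n+1)^{c_n}$, i.e.\ the right endpoint shaved by one, which is why the paper replaces the interval $(p_n^m,(p_n+1)^m)$ by $(p_n^m,(p_n+1)^m-1)$ using the divisibility $p_n\mid (p_n+1)^m-1$. This costs nothing in your heuristic, but a careful write-up should include it. Your closing remark about tracking the running intersection rather than choosing primes greedily is also apt if one wants to recover the specific constant $1.1966746500705764022\ldots$ rather than merely some admissible $A$.
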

Note that the exponent $(n+1)^2$ grows polynomially compared to the
exponential growth of $m^n$ (for fixed $m$) in Mills-type  examples.
The value above would give 
$p_1=2, p_3=5, p_4=17, p_5=89, p_6=641, p_7=6619, p_8=97829, p_9=2070443$.

\begin{acknowledgment}{Acknowledgments.}
The author would like to thank Timothy Trudgian for drawing
the attention to his student Caitlin Mattner's work, 
and to the referees and editor for useful comments on the manuscript.\\
The author was partially supported by the Austrian Science Fund (FWF): W1230.\\
\end{acknowledgment}

\begin{affil}
Institut f\"ur Analysis und Zahlentheorie,
Kopernikusgasse 24, 8010 Graz, Austria\\ 
elsholtz@math.tugraz.at\\

\end{affil}

\end{document}